\documentclass[12pt]{article}
\usepackage[utf8]{inputenc}
\usepackage{amssymb, amsmath,amsthm, thmtools, mathabx, mathtools}
\usepackage{bbm}
\usepackage{amsfonts}
\usepackage[left=2.5cm,right=2.5cm,top=2cm,bottom=2cm]{geometry}
\usepackage{xcolor}

\usepackage{tabto}
\TabPositions{2 cm, 4 cm, 6 cm, 8 cm}

\usepackage{tikz-cd}

\usepackage{tikz}
\usetikzlibrary{arrows, automata, positioning}

\usepackage{tocloft}
\usepackage{appendix}

\usepackage{enumerate}

\usepackage{hyperref}
\usepackage[capitalize]{cleveref}

\hypersetup{
  colorlinks   = true, %Colours links instead of ugly boxes
  urlcolor     = blue, %Colour for external hyperlinks
  linkcolor    = blue, %Colour of internal links
  citecolor   = blue %Colour of citations
}
 
 \newtheorem{thmA}{Theorem}

\newtheorem{theorem}{Theorem}[section]

\newtheorem{lemma}[theorem]{Lemma}

\newtheorem*{theorem*}{Theorem}
\newtheorem*{lemma*}{Lemma}

\theoremstyle{definition}
\newtheorem{definition}[theorem]{Definition}

\newtheorem{remark}[theorem]{Remark}

\newtheorem{question}[theorem]{Question}

\newtheorem*{remark*}{Remark}
\newtheorem*{notation*}{Notation}
\newtheorem*{acks*}{Acknowledgements}
\newtheorem*{out*}{Outline}

\renewcommand\leq{\leqslant}
\renewcommand\geq{\geqslant}

\newcommand{\N}{\mathbb{N}}
\newcommand{\Z}{\mathbb{Z}}
\newcommand{\Q}{\mathbb{Q}}

\newcommand{\normal}[1]{\left<\! \left< #1\right> \!\right>}

\newcommand{\FP}{\mathtt{FP}}
\newcommand{\F}{\mathtt{F}}
\newcommand{\higig}{\mathrm{Hig}_\iota(G)}

\begin{document}

\title{Finiteness properties and Higman's rope trick}
\author{Francesco Fournier-Facio and Matthew C. B. Zaremsky}
\date{\today}
\maketitle

\begin{abstract}
In 1961 Higman proved that every finitely generated recursively presented group embeds into a finitely presented group. In 2018 Leary proved that every finitely generated group embeds into a group of type $\FP_2$. One naturally wonders whether analogous results hold for the higher finiteness properties $\F_n$ and $\FP_n$ ($3\le n\le \infty$), i.e., whether every finitely generated recursively presented group embeds into a group of type $\F_n$ and whether every finitely generated group embeds into a group of type $\FP_n$. We prove that a positive answer to the $\FP_n$ question that moreover preserves recursive presentability would imply a positive answer to the $\F_n$ question. We also investigate the output groups from Higman's and Leary's proofs, which in both cases arise from the so-called ``Higman rope trick'', and find that they are never of type $\FP_3(\Q)$ (hence also never $\FP_3$ nor $\F_3$). Thus, any approach to the questions of higher finiteness properties must go via a different route than the Higman rope trick.
\end{abstract}

\section{Introduction}

\begin{theorem}[The Higman embedding theorem \cite{higman}]
Every finitely generated recursively presented group embeds into a finitely presented group.
\end{theorem}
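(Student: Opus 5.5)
We follow Higman's original strategy in its modern form, via benign subgroups and the rope trick. Write the given group as $G=F/N$, where $F=F(a,b)$ is free of rank two; any finitely generated group embeds in a $2$-generated one, and this can be done preserving recursive presentability. Since $G$ is recursively presented, $N$ is the normal closure of a recursively enumerable set of words. Call a subgroup $H$ of a finitely presented group $K$ \emph{benign} if $K$ embeds in a finitely presented group $L$ in which $H=K\cap M$ for some finitely generated $M\le L$. The first observation is standard: if $N$ is benign in $F$, then $G$ embeds in a finitely presented group. We build the HNN extension $L*_{M}$ with stable letter $t$ acting trivially on $M$, and inside it form
\[
\langle F, t\rangle \big/ \langle\!\langle [t,N]\rangle\!\rangle .
\]
More concretely, we take the amalgam $L*_M L$ and show that $F/N$ embeds in a suitable quotient. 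Equivalently, $G$ embeds in $(L*_M L)$ modulo a finite set of identifications. Checking this uses Britton's lemma and normal forms for amalgams; it is routine.

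The main step is therefore showing that every recursively enumerable subgroup $N$ of $F$ is benign. Here $N$ is recursively enumerable because it is the normal closure of an r.e. set. We proceed in three stages.

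\textbf{Encoding.} Words in $F$ are encoded by integer sequences, and subgroups of $F$ generated by r.e. sets of words are then represented by subsets of $\mathbb{Z}^\infty$. This is done through the finitely generated subgroups $\langle b_f\rangle$ of $F(a,b,c)$, where $b_f=\prod_i c^{-i}b_i^{f(i)}c^{i}$ and $b_i=a^{-i}ba^{i}$.

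\textbf{Closure.} We show that the class of subsets $S\subseteq\mathbb{Z}^\infty$ for which the corresponding subgroup $A_S$ is benign is closed under a list of operations: intersection, union, and the Higman operations $\iota,\upsilon,\rho,\sigma,\tau,\theta,\zeta,\pi,\omega_m$. Each closure claim is proved by exhibiting explicit HNN extensions and amalgams that are finitely presented relative to the previous ones. The tool throughout is the lemma that benign subgroups are closed under intersections, joins, and images and preimages under suitable homomorphisms.

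\textbf{Recursive sets.} We verify that every r.e. subset of $\mathbb{Z}^\infty$ arises from the basic sets by finitely many of these operations. This is done by simulating the primitive recursive functions together with the $\mu$-operator, or equivalently by using Matiyasevich-style Diophantine descriptions or register machines.

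The main obstacle is the closure stage: keeping the finitely presented groups and their intersections under control while realizing each operation. We will treat this with the explicit "rope trick" constructions, as in Lyndon--Schupp, Chapter IV.7. Having done so, we apply the benign-implies-embeddable observation to $N$, which finishes the proof.
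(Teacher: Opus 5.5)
Your overall architecture (benign subgroups, Higman's operations on $\mathbb{Z}^\infty$, simulation of recursive functions) is Higman's original one. The paper does not reprove it: it cites Higman and spells out only the rope trick, which reduces the theorem to embedding the double $D=F_1*_N F_2$ into a finitely presented group.

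The first half of your ``benign implies embeddable'' step does this correctly. Suppose $F\le L$ with $L$ finitely presented and $F\cap M=N$ for a finitely generated $M\le L$. In the HNN extension of $L$ with $t$ centralising $M$, Britton's lemma gives $\langle F,t^{-1}Ft\rangle\cong F*_N F$; likewise $\langle F_1,F_2\rangle\cong F_1*_N F_2$ in $L*_M L$ by normal forms. So $D$ sits inside a finitely presented group $P$.

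The gap is your next claim: that $G$ embeds in a quotient of $L*_M L$ by finitely many relations, and that this is routine via Britton's lemma. You give no mechanism, and the natural one fails. Killing $F_2$ kills its normal closure in $P$, whose intersection with $F_1$ contains $N$ but is otherwise uncontrolled. For instance, if $F$ normally generates $L$, then killing $F_2$ in $L*_M L$ kills the whole second factor, hence $M$. What survives of $F_1$ is then $F_1/(F_1\cap\normal{M})$, with the normal closure taken in $L$, and this can be far larger than $N$. Britton's lemma and amalgam normal forms describe subgroups of $P$, not kernels of quotients of $P$, so they cannot certify such an embedding.

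The missing idea is the rope trick itself, which you instead assign to the closure stage; there the arguments are ordinary HNN and amalgam constructions. One does not quotient $P$; one enlarges it.

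\begin{enumerate}
\item Let $\pi\colon D\to G$ kill $F_2$. Form the HNN extension of $P\times G$ whose stable letter $s$ conjugates $(w,\pi(w))$ to $(w,1)$ for $w\in D$. This is $\higig$ from Definition~\ref{def:higig}.
\item $G$ embeds for free, as a factor of the vertex group.
\item The real content is finite presentability. Take as relations those of $P$, the commutators of the generators of $P$ with the generators $x_i$ of the $G$-factor, and the conjugation relations for the finitely many generators of $D$.
\item For $r\in N$, write $r_1,r_2$ for its copies in $F_1,F_2\le P$ and $r(x)$ for the same word in the $x_i$. The listed relations give $s^{-1}r_1\,r(x)\,s=r_1$ and $s^{-1}r_2\,s=r_2$. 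Since $r_1=r_2$ already holds in $P$, this forces $r(x)=1$.
\item Hence every relation of $G$ follows from the finite list, and the finitely presented group so defined is $\higig$.
\end{enumerate}

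Replace your ``suitable quotient'' step with this. Also state explicitly the transfer from benignness of the coded subgroups back to benignness of $N$ in $F$, via images of benign subgroups under the decoding homomorphism. With these two changes your outline becomes the standard proof.
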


The Higman embedding theorem is one of the cornerstones of combinatorial group theory, which has found innumerable applications, such as the existence of a universal finitely presented group \cite{higman} and an algebraic characterisation of groups with solvable word problem \cite{BH}, just to name two early ones. The statement above actually implies that every \emph{countable} recursively presented group embeds into a finitely presented group, however the passage from countable to finitely generated is standard and easy \cite{HNN} (and the converse of the Higman embedding theorem is only true in the finitely generated case), so throughout this paper we only work with finitely generated groups for convenience.

\medskip

Much more recently, Leary proved the following surprising homological version of the Higman embedding theorem.

\begin{theorem}[\cite{leary:subgroups}]
    Every finitely generated group embeds into a group of type $\FP_2$.
\end{theorem}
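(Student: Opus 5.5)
We follow Leary's strategy: replace the group-theoretic machinery of Higman by a homological one, using the Bieri--Stallings/Bestvina--Brady style of Morse theory on cube complexes. The point is that $\FP_2$ only sees the relation module, so a group whose relation module is finitely generated can have an infinite, even non-recursive, presentation. First reduce to a convenient target. Every finitely generated group $G$ is a quotient of a free group $F_2$ and embeds (by standard HNN/small cancellation embeddings) into a two-generated group. So it suffices to find, for each subset $S\subseteq\Z$, a group of type $\FP_2$ containing a group $G_S$ into which $G$ embeds, where $S$ encodes the relations of $G$.

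The construction goes through the rope trick. Let $L$ be a finite flag complex that is acyclic but not simply connected; for example, the flag triangulation of a presentation complex of a perfect group, coned appropriately to kill homology. Form the right-angled Artin group $A_L$ and the Bestvina--Brady kernel of the height map $A_L\to\Z$. The level sets of the height function on the Salvetti cube complex are copies of the universal branched cover glued along $L$-links. Leary's key step builds branched covers of the cube complex whose branching over the vertices at height $n$ is controlled by whether $n\in S$. The resulting groups $G_L(S)$, one per subset $S$, are then obtained as fundamental groups of level sets. Since $L$ is acyclic, the Brown/Bestvina--Brady Morse lemma applied to homology shows that $G_L(S)$ is of type $\FP$ (in particular $\FP_2$) over $\Z$ for \emph{every} $S$, whatever its complexity: the ascending and descending links are copies of $L$ or of covers that are acyclic. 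Meanwhile $\pi_1(L)$ embeds in the local group at each branch point, and branching at $n\in S$ realises the relation indexed by $n$.

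Finally we embed $G$ via Higman's rope trick. Take $\pi_1(L)$ to contain a free subgroup, and arrange that the relators of $G$ are encoded by $S$. The elements $a^{n}$ with $n\in S$ are killed in a suitable amalgam, which yields an embedding of a two-generator group of the form $\langle a,b \mid \text{relators indexed by } S\rangle$. Concretely, the map $n\mapsto$ a conjugate of a fixed relator realises any subset $S$, and a suitable $S$ gives a group containing $G$. Taking a free product with amalgamation along finitely generated subgroups preserves $\FP_2$, so $G$ embeds into an $\FP_2$ group.

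The main obstacle is the middle step: verifying that the branched-cover groups are $\FP_2$ independently of $S$ while still seeing $S$ in their subgroup structure. This requires a careful equivariant chain-complex computation of the Morse-theoretic filtration with acyclic links. I would attempt it first by proving the chain complex of the branched cover is a finite-type free resolution truncated at degree $2$.
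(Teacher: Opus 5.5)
You have the right ingredients in view (Leary's branched-cover groups, a coding of relations by a set $S$, and the rope trick). But there is a genuine gap exactly where the theorem has content: nothing in the proposal exhibits $G$ as a subgroup of a group of type $\FP_2$.

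That the groups $G_L(S)$ are $\FP_2$ for every $S$ is the input from \cite{leary:uncountably}, not the main obstacle. The proof actually needs two further steps. (i) For $G=F/R$, find an $\FP_2$ group $P$ containing the double $L=F_1*_RF_2$ (or the HNN extension $\langle F,t\mid tr=rt,\ r\in R\rangle$, which contains it). This is done by coding the elements of $R$ into $S$ and using the construction of \cite{leary:uncountably} applied to a specific subgroup $V_S$ of a finitely generated free group. (ii) Pass from $L\le P$ to $G$ via the HNN extension $\higig$ of $P\times G$ along $L$, conjugating $\iota\times 1$ to $\iota\times\pi$.

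Your final paragraph replaces both steps with sentences that do not define a group or an injective map. ``Killing'' elements indexed by $S$ in an amalgam produces a quotient, not a subgroup. Nothing explains why $\langle a,b\mid \text{relators indexed by } S\rangle$, for an arbitrary and possibly non-recursive $S$, should embed in anything of type $\FP_2$. It is the double $L$, not $G$, that can be embedded directly; $G$ only appears after the rope trick.

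The tool you cite to finish also cannot do the job. Bieri's result that an amalgam or HNN extension along a finitely generated subgroup is $\FP_2$ applies only when the vertex groups are themselves $\FP_2$. If $G$ is to sit in a vertex group, you would already need an $\FP_2$ group containing $G$, which is circular. In the construction that works, the vertex group is $P\times G$, which retracts onto $G$ and so is not $\FP_2$ unless $G$ is. That $\higig$ is nonetheless $\FP_2$ is a separate argument, Leary's $\FP_2$ version of the rope trick \cite[Lemma 2.2]{leary:subgroups}. Since $(r,1)$ is conjugate by the stable letter both to $(r,r)$ and to $(r,1)$, the relations of $G$ are forced by the HNN structure. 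Because $P$ is only assumed $\FP_2$, this has to be verified homologically (at the level of the relation module) rather than by counting relators. This step, together with (i), is the missing idea.

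A lesser inaccuracy: your Morse-theoretic argument asserts that the branch-point links are ``covers that are acyclic''. Covers of acyclic complexes need not be acyclic: by an Euler characteristic count, the universal cover of an acyclic $2$-complex whose fundamental group is finite of order $d>1$ has $H_2$ of rank $d-1$. For $\FP_2$ only low-degree homology of these pieces matters, so this is repairable. However, your claim of type $\FP$ for all $S$ does not follow as stated.
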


Recall that a group $G$ is of \emph{type $\FP_n(A)$}, where $A$ is a commutative unital ring and $n \in \N \cup \{\infty\}$, if the trivial $A G$-module $A$ admits a projective resolution $P_* \to A$ such that $P_i$ is finitely generated for $0 \leq i \leq n$; we write $\FP_n = \FP_n(\Z)$, and note that $\FP_1$ is equivalent to being finitely generated. It was a longstanding open question whether every group of type $\FP_2$ is finitely presented---this was answered negatively by Bestvina and Brady \cite{BB}, and Leary's theorem is in a sense the most dramatic negative answer. 

\medskip

A fundamental open question (see \cite[Q 8.7]{bestvina}, \cite[21.146]{kourovka}, \cite[1.1]{matt}, and the comments to \cite{agol}) is whether the theorems of Higman and Leary admit higher-dimensional analogues. Recall that a group is of \emph{type $\F_n$} if it admits a classifying space with finitely many cells in each dimension up to $n$. Type $\F_1$ is equivalent to being finitely generated and type $\F_2$ is equivalent to being finitely presented. It is easy to see that type $\F_n$ implies type $\FP_n$, and in fact type $\FP_n$ together with finite presentability implies type $\F_n$ (Lemma \ref{lem:FP_to_F}).

\begin{question}\label{quest:higher_F}
    Does every finitely generated recursively presented group embed in a group of type $\F_3$? In a group of type $\F_\infty$?
\end{question}

\begin{question}\label{quest:higher_FP}
    Does every finitely generated group embed in a group of type $\FP_3$? In a group of type $\FP_\infty$?
\end{question}

Note that Question~\ref{quest:higher_FP} is not limited by cardinality issues, since there exist uncountably many groups of type $\FP_\infty$ \cite{leary:uncountably}. Also note that Question \ref{quest:higher_FP} is open even if one replaces type $\FP_n$ with the more permissive type $\FP_n(\Q)$ (to the best of our knowledge, there is no proof that every finitely generated group embeds into a group of type $\FP_2(\Q)$ simpler than the one in \cite{leary:subgroups} achieving $\FP_2$).

\medskip

Our first main result clarifies the relationship between a hypothetical higher Leary theorem and a hypothetical higher Higman theorem.

\begin{thmA}\label{thm:FP_to_F}
    Let $n \in \N \cup \{\infty\}$. If every finitely generated recursively presented group embeds into a recursively presented group of type $\FP_n$, then every finitely presented group embeds into a group of type $\F_n$.
\end{thmA}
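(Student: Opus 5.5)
The plan is to upgrade an $\FP_n$ overgroup to an $\F_n$ overgroup by first making it finitely presented while keeping $\FP_n$, and then applying Lemma~\ref{lem:FP_to_F}. Let $G$ be finitely presented. In particular $G$ is finitely generated and recursively presented, so by hypothesis $G$ embeds into a recursively presented group $H$ of type $\FP_n$. Since $H$ is of type $\FP_1$, it is finitely generated. Hence $H$ is a finitely generated recursively presented group, and Higman's embedding theorem gives an embedding of $H$ into a finitely presented group $K$.

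The difficulty is that $K$ need not be of type $\FP_n$. So I will not use $K$ directly. Instead I will build a finitely presented group that contains $G$ and is $\FP_n$, by an amalgamation or HNN construction in which $H$ and $K$ enter symmetrically. The cleanest choice seems to be the following. Form an HNN extension or amalgam of two copies of $K$ along $H$, say the double $D = K *_H K$. Then apply the Mayer--Vietoris / Bieri--Eckmann type criteria for finiteness properties of graphs of groups, which relate $\FP_n$ of vertex groups, edge groups and the whole group. However, these criteria require the vertex groups themselves to be $\FP_n$, and $K$ is not. That is the main obstacle.

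To get around it, I will use the standard trick that $\FP_n$ is inherited by the fundamental group of a graph of groups from its vertex groups. The converse direction also holds: if the edge groups are $\FP_{n}$ (resp.\ $\FP_{n-1}$) and the whole group is $\FP_n$, then the vertex groups are $\FP_n$. So I need a finitely presented overgroup all of whose "bad" parts are controlled by $H$.

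The concrete plan is to realise a finitely presented group as a graph of groups whose vertex groups are $\FP_n$ copies built from $H$, with edge groups also $\FP_n$. Attempt: consider $\Gamma = H *_{G} H$, or an HNN extension of $H$ along $G$. Both $H$ and $G$ are $\FP_n$-compatible: $G$ is finitely presented, but $G$ need not be $\FP_n$. Hence I would replace $G$ by finitely generated free subgroups. Take a finite generating set $x_1,\dots,x_r$ of $H$, and adjoin stable letters conjugating $H$-elements to make the relations of $H$ follow from finitely many. Concretely, $H$ is $\FP_2$, so its relation module is finitely generated. I expect to need the finite presentability input from Higman, though, since $\FP_2$ alone does not imply a finite presentation.

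Given this, my honest expectation is that the key step is a lemma of the following shape. If $H$ is $\FP_n$ and embeds in a finitely presented group, then $H$ embeds in a finitely presented group of type $\FP_n$. I would prove it with a graph of groups whose vertex group is $H$ and whose edge groups are free of finite rank. Such an HNN extension $H*_{F_i}$ is $\FP_n$ by Mayer--Vietoris, because free groups are $\F_\infty$. I would choose the stable letters so that the result is finitely presented, by mimicking Higman's rope trick at the level of killing the kernel of $F(x_1,\dots,x_r)\to H$ using finitely many relations that come from $K$. Once such a group $L \supseteq H \supseteq G$ is found, Lemma~\ref{lem:FP_to_F} gives that $L$ is $\F_n$, which completes the proof. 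The case $n=\infty$ is the same, applied for all $n$ simultaneously to a single $L$.
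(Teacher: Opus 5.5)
There is a genuine gap. Your reduction to $G \hookrightarrow H \hookrightarrow K$, with $H$ recursively presented of type $\FP_n$ and $K$ finitely presented, is fine. But you never produce a finitely presented overgroup of type $\FP_n$. The ``key lemma'' you formulate is left unproved, and the one concrete construction you suggest for it cannot work.

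Suppose $L$ is an HNN extension of $H$ with finitely many stable letters and associated subgroups free of finite rank, and suppose $L$ is finitely presented. Then the presentation $\langle X, t_j \mid R_H,\ t_j u_{j,i} t_j^{-1} = v_{j,i}\rangle$ can be cut down to one using only a finite set $R_0 \subseteq R_H$. Here the $u_{j,i}, v_{j,i}$ are words giving bases of the associated subgroups. In $H_0=\langle X \mid R_0\rangle$ the $u_{j,i}$ (and likewise the $v_{j,i}$) still freely generate free subgroups, since these surject basis-to-basis onto free groups of the same rank. So $L$ is also an HNN extension of $H_0$. Hence $H_0 \to H$ is injective, and $H$ is finitely presented. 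But if $H$ were finitely presented you would already be done by Lemma~\ref{lem:FP_to_F}, with no construction at all.

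This is the homotopical form of the converse you quote. That converse holds with edge groups of type $\FP_n$ (homotopically: finitely presented edge groups), but not with edge groups of type $\FP_{n-1}$. The rope trick exploits exactly this failure, through the finitely generated but non-$\FP_2$ edge group $L=F_1*_R F_2$. For the actual rope-trick groups $\higig$, Theorem~\ref{thm:rope} shows that $\FP_3(\Q)$ always fails. So neither version of ``mimicking the rope trick'' gets you there.

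The missing idea is to use the freedom in choosing $G$. By Higman it suffices to treat a \emph{universal} finitely presented group $G$. Then Higman's theorem embeds $H$ back into $G$ itself, giving $G \xrightarrow{\iota} H \xrightarrow{\jmath} G$. Let $E$ be the ascending HNN extension of $G$ along the self-embedding $\jmath\iota$; it is finitely presented because $G$ is. But $E$ is also an ascending HNN extension of $\jmath(H)\cong H$. To see this, check three things and apply \cite[Lemma 3.1]{ascending}:
\begin{enumerate}
\item $t\jmath(H)t^{-1}\subseteq tGt^{-1}=\jmath\iota(G)\subseteq \jmath(H)$;
\item $\jmath(H)$ and $t$ generate $E$;
\item no nontrivial power of $t$ lies in $\jmath(H)\subseteq G$.
\end{enumerate}
Hence $E$ is of type $\FP_n$ by \cite[Proposition 2.12]{bieri}, and Lemma~\ref{lem:FP_to_F} makes it of type $\F_n$. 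The point is that one and the same group is an ascending HNN extension over a finitely presented base and over an $\FP_n$ base. So no edge group with bad finiteness properties ever enters.
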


Higman's original proof of the embedding theorem, as well as some subsequent accounts and simplifications of it (see e.g., \cite{rotman, LS}), rely crucially on the so-called \emph{Higman rope trick} \cite[Lemma IV.7.6]{LS}. Since the details will be important shortly, let us give a general statement and sketch a quick proof.

\begin{definition}[The group $\higig$]
\label{def:higig}
    Let $G$ be a finitely generated group, written as $F/R$, where $F$ is a finitely generated free group. Let $L = F_1 *_R F_2$ be the double, and let $\pi \colon L \to G$ be the surjection obtained by killing $F_2$. Let $\iota \colon L \to P$ be an embedding into a finitely generated group.

    We denote by $\higig$ the HNN extension with vertex group $P \times G$ and edge group $L$, conjugating the embeddings $\iota \times 1$ and $\iota \times \pi$.
\end{definition}

\begin{lemma}[Higman's rope trick]
    Let $G$ be a finitely generated group, written as $F/R$, where $F$ is a finitely generated free group. Let $\higig$ be as in Definition \ref{def:higig}, and suppose that $P$ is finitely presented. Then $\higig$ is finitely presented.
\end{lemma}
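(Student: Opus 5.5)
Write $F = F(x_1,\dots,x_k)$, so $L = F_1 *_R F_2$ is generated by two copies $x_i^{(1)}, x_i^{(2)}$ of the generators, and $\pi$ sends $x_i^{(1)} \mapsto x_i R$ and $x_i^{(2)} \mapsto 1$. The plan is to write an explicit presentation of $\higig$ and then show that all but finitely many of its relators are redundant.

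\textbf{Step 1: a presentation.} Fix a finite presentation $\langle Y \mid S\rangle$ of $P$. Let $w_i, v_i$ be words in $Y$ representing $\iota(x_i^{(1)})$ and $\iota(x_i^{(2)})$. Let $\langle z_1,\dots,z_k \mid r \in R\rangle$ present $G$, with $z_i = x_iR$. Then $P \times G$ has presentation with generators $Y \cup \{z_i\}$ and relators $S$, $[y,z_i]$ for $y \in Y$, and $r(z)$ for $r \in R$. Since $L$ is generated by the finitely many $x_i^{(j)}$, the HNN extension $\higig$ is presented by adding a stable letter $t$ and the finitely many relations
\[
t w_i t^{-1} = w_i z_i, \qquad t v_i t^{-1} = v_i .
\]
Every relator except the family $r(z)$, $r \in R$, is already finite in number.

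\textbf{Step 2: derive $r(z)=1$ from the finite relators.} Fix $r \in R$. In $F_1 *_R F_2$ we have $r(x^{(1)}) = r(x^{(2)})$, so $\iota$ gives $r(w) = r(v)$ in $P$, hence a consequence of $S$. Because the $z_i$ commute with $Y$ by the finite commutation relators, the homomorphism $x_i^{(1)} \mapsto w_i z_i$ is well defined on the relevant words and gives $t\, r(w)\, t^{-1} = r(w z) = r(w)\, r(z)$. On the other hand, $t\, r(v)\, t^{-1} = r(v)$. Using $r(w) = r(v)$ we obtain $r(w) = r(w)\, r(z)$, and hence $r(z) = 1$. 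So each relator $r(z)$ is a consequence of $S$, the commutators, and the $2k$ HNN relations.

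\textbf{Step 3: conclusion.} By Tietze transformations we delete all the $r(z)$, which leaves a finite presentation of $\higig$.

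The main point to handle carefully is Step 2, namely justifying $t\,r(w)\,t^{-1} = r(w)r(z)$ using only the finite set of relators. This follows by conjugating letter by letter and then using the commutation relations to collect the $z$'s, which turns $r(wz)$ into $r(w)\,r(z)$. No knowledge of $R$ beyond the identity $r(x^{(1)}) = r(x^{(2)})$ in $L$ is needed.
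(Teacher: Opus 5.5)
Your proof is correct and follows the same route as the paper's sketch. The paper also observes that conjugating $r \in R$ by the stable letter yields $(r,r)$ when $r$ is read in $F_1$ and $(r,1)$ when $r$ is read in $F_2$, so the relators of $P$, the commutation relators and the finitely many HNN relators force $(1,r)=1$. Your version just writes out the presentation and the letter-by-letter conjugation explicitly.
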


\begin{proof}[Proof sketch]
    Given $r \in R < L$, the element $(r, 1)$ is conjugate under the stable letter to $(r, r) \in P \times F/R$ (considering $r \in F_1)$ but also to $(r, 1)$ (considering $r \in F_2$). This shows that the finitely many relations given by those determining $P$, those saying the factors of the direct product commute, and those determining the conjugation in the HNN extension, imply for all $r\in R$ that $(r, r) = (r, 1)$ hence $(1, r) = 1$. Thus all the defining relations in $G$ are consequences of these finitely many, and $\higig$ is finitely presented.
\end{proof}

This reduces Higman's embedding theorem to proving that, if $R < F$ is a recursively enumerable normal subgroup, then the double $L = F_1 *_R F_2$ embeds into a finitely presented group, which is the core of Higman's proof. The proof of Leary's embedding theorem in \cite{leary:subgroups} also crucially employs the rope trick: one can show that in the above argument, if we only assume $P$ is $\FP_2$, then this is still enough to ensure that $\higig$ is $\FP_2$ \cite[Lemma 2.2]{leary:subgroups}. The name is due to Lyndon and Schupp \cite{LS}, in reference to the Indian rope trick. Schupp explains: ``As one is concentrating on the details of the proof, Higman the magician makes the infinitely many relations disappear'' \cite{name}.

\medskip

Our second main result is that answering Question~\ref{quest:higher_F} or~\ref{quest:higher_FP} would necessarily have to go through a different route than the rope trick. We emphasise that in the below theorem $P$ and $G$ could have as strong of finiteness properties as desired, they could even be of type $\F_\infty$.

\begin{thmA}
\label{thm:rope}
    Let $G = F/R$, and let $\higig$ be as in Definition \ref{def:higig}. Suppose $G$ is infinite and $R \neq 1$. Then $H_3(\higig; \Q)$ is infinite-dimensional; in particular $\higig$ is not of type $\FP_3(\Q)$.
\end{thmA}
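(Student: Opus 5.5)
We compute $H_3(\higig;\Q)$ with the Mayer--Vietoris sequence for the HNN extension with vertex group $V = P\times G$ and edge group $L$. The relevant portion is
\[
H_3(L;\Q) \xrightarrow{\ (\iota\times 1)_* - (\iota\times\pi)_*\ } H_3(P\times G;\Q) \to H_3(\higig;\Q) \to H_2(L;\Q) \xrightarrow{\ \partial_2\ } H_2(P\times G;\Q).
\]
So $H_3(\higig;\Q)$ surjects onto $\ker \partial_2$. It therefore suffices to exhibit an infinite-dimensional subspace of $H_2(L;\Q)$ killed by $(\iota\times 1)_* - (\iota\times \pi)_*$ in degree $2$.

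\textbf{Step 1: $H_2(L;\Q)$.} Here $L = F_1 *_R F_2$ is an amalgam of free groups, so its Mayer--Vietoris sequence gives
\[
0=H_2(F_1)\oplus H_2(F_2) \to H_2(L) \to H_1(R) \to H_1(F_1)\oplus H_1(F_2),
\]
with rational coefficients. Both components of the last map are the same map $H_1(R;\Q)\to H_1(F;\Q)$. Its kernel $K$ is the kernel of $R_{ab}\otimes\Q \to F_{ab}\otimes \Q$, and $H_2(L;\Q)\cong K$. Since $R\neq 1$ is normal in $F$ and $G=F/R$ is infinite, $R$ is a free group of infinite rank. As a $\Q G$-module, $R_{ab}\otimes \Q$ is the relation module, while $F_{ab}\otimes\Q$ is finite-dimensional. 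Hence $K$ is infinite-dimensional.

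\textbf{Step 2: the image of $K$ in $H_2(P\times G)$.} Künneth gives
\[
H_2(P\times G)=H_2(P)\oplus (H_1(P)\otimes H_1(G))\oplus H_2(G).
\]
The two maps $L\to P\times G$ have the same first coordinate $\iota$, so their difference on $H_2$ lands only in terms involving $G$. The map $(\iota\times 1)_*$ has no $G$-component. The $\pi$-component of $(\iota\times\pi)_*$ factors through $\pi_*\colon H_2(L)\to H_i(G)$ for $i=1,2$ (the cross term uses $H_2(L)\to H_2(L\times L)$ via the diagonal). Since $\pi$ kills $F_2$, it factors through the quotient $L\to F_1/R = G$, i.e.\ through $L\to L/\normal{F_2}$. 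Concretely, a class in $K$ comes from a cycle $z\in R_{ab}$ that bounds in both $F_1$ and $F_2$; it is the union of two $2$-chains glued along $z$. Under $\pi$, the $F_2$-side chain collapses, so the image is the standard Hopf class. By Hopf's formula,
\[
H_2(G)\cong (R\cap[F,F])/[F,R],
\]
which rationally is the image of $K$ under the coinvariants map $K\to (R_{ab}\otimes\Q)_G$. The kernel of $\partial_2$ therefore contains the kernel of the map
\[
K \to H_2(G;\Q)\oplus (H_1(P;\Q)\otimes H_1(G;\Q)).
\]
For the cross term, I expect the diagonal component $H_1\otimes H_1$ of $\iota\times\pi$ to vanish on $K$, since the Künneth cross-component of a diagonal map evaluated on classes from $H_2(L)$ is controlled by cup products in $H^1(L)$. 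This needs checking.

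\textbf{Step 3 (main obstacle): infinite-dimensionality of the kernel.} The obstacle is that $H_2(G;\Q)$ may itself be infinite-dimensional when $G$ is not finitely presented. A pure dimension count then fails, so I would argue structurally. The augmentation ideal part $I_G\cdot(R_{ab}\otimes\Q)$ intersected with $K$ maps to zero in the coinvariants $H_2(G)$. This intersection is infinite-dimensional: $R_{ab}\otimes\Q$ contains a nonzero element $r$, and the elements $(g-1)r$ for $g\in G$ are infinitely many. These span an infinite-dimensional space, because the relation module embeds in $\Q G^{d}$ via the Fox derivative, which is torsion-free enough here, and $G$ is infinite. They map to zero in $F_{ab}\otimes\Q$ because conjugation acts trivially on $F_{ab}$. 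So they lie in $K$ and die in $H_2(G)$, and the subspace $I_G K$ is infinite-dimensional.

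The remaining task is to handle the cross term $H_1(P)\otimes H_1(G)$. This target is finite-dimensional since $P$ and $G$ are finitely generated, so it can only cut down an infinite-dimensional space by finitely many dimensions. Thus $\ker\partial_2$ is infinite-dimensional, and $H_3(\higig;\Q)$ surjects onto it, so $H_3(\higig;\Q)$ is infinite-dimensional. Finally, a group of type $\FP_3(\Q)$ has finite-dimensional $H_3(\,\cdot\,;\Q)$, so $\higig$ is not of type $\FP_3(\Q)$.
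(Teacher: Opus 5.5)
Your argument is essentially the paper's: Mayer--Vietoris for the HNN extension, $H_2(L)\cong\ker(H_1(R)\to H_1(F))$, discarding the finite-dimensional K\"unneth cross term, and identifying $\ker H_2(\pi)$ with $[F,R]/[R,R]=I_G\cdot H_1(R)$ via Hopf's formula. Your fallback of discarding the cross term by finite-dimensionality is the right move, because that term need not vanish on $K$, so do not rely on the vanishing you expected.

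The one step to tighten is the infinite-dimensionality of $I_G\cdot(R_{ab}\otimes\Q)$. Torsion-freeness is beside the point there. What you need, and what the paper's disjoint-circuits argument supplies, is this: a nonzero $m\in\Q G^d$ has finite support, so since $G$ is infinite you can choose $g_n$ with the translates $g_nm$ having pairwise disjoint supports. Those translates are then linearly independent, and $\langle g_nm\rangle\subseteq\langle m,(g_n-1)m\rangle$ gives the conclusion.
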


Theorem \ref{thm:rope} shows that an answer to Question \ref{quest:higher_F} or \ref{quest:higher_FP} would either need some sort of higher version of the rope trick, or perhaps go through the alternative, and much less elementary, approach of $S$-machines, introduced in \cite{S}. These techniques can produce Higman embeddings that are drastically different from the classical ones and give more control; for example one can even ensure that the recursively presented subgroup is malnormal in the finitely presented one \cite{wagner}, something one cannot obtain with tricks involving direct products.

Moreover, Theorem \ref{thm:rope} combined with the Higman embedding theorem provides a rich source of groups that are finitely presented but not of type $\FP_3(\Q)$. There are by now a wealth of examples with these properties, but they are still quite hard to come by. The first example, by Stallings \cite{stallings}, is the kernel of a map from a product of free groups to $\Z$; later Brady provided an example as the kernel of a map from a hyperbolic group to $\Z$ \cite{brady}, and this is still the most common source of groups with intermediate finiteness properties \cite{LIP}. Other classes of examples arise as arithmetic \cite{arithmetic} or Houghton groups \cite{brown87}, and these can in turn be used to produce simple groups with intermediate finiteness properties \cite{SWZ, tbt}. We find it remarkable that the groups in Theorem~\ref{thm:rope} providing these ``new'' examples were already present in Higman's work \cite{higman}, predating the proven first example of Stallings \cite{stallings}.

\begin{acks*}
    We thank Ian Leary for helpful discussions. FFF is supported by the Herchel Smith Postdoctoral Fellowship Fund.
\end{acks*}

\section{From homological to homotopical}\label{sec:hlgy_to_htpy}

Perhaps the most fundamental fact involving homological and homotopical finiteness properties is that type $\F_2$ plus type $\FP_n$ implies type $\F_n$. This is usually attributed to Wall, after \cite{wall1, wall2}, where similar statements appear. Despite its fundamental importance, we were surprised that we could not find an explicit proof of exactly this fact in the literature. Since this plays an essential role in the proof of Theorem \ref{thm:FP_to_F}, for the sake of completeness and expository clarity, we include a proof, which is analogous to the proof of the Eilenberg--Ganea theorem \cite{EG} (see also \cite[Section VIII.7]{brown}).

\begin{lemma}\label{lem:FP_to_F}
    If a finitely presented group $G$ is of type $\FP_n$, then it is of type $\F_n$.
\end{lemma}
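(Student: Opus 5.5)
We argue by induction on $k$, building $K(G,1)$ one skeleton at a time and killing homotopy with finitely many cells at each stage, as in the proof of the Eilenberg--Ganea theorem. It suffices to treat finite $n \geq 2$: the case $n=\infty$ then follows, since a $K(G,1)$ can be built skeleton by skeleton with each finite-skeleton stage handled below. Since $G$ is finitely presented, take the presentation $2$-complex $X_2$: it is finite, has $\pi_1(X_2)=G$, and its universal cover $\widetilde X_2$ is simply connected. The inductive hypothesis for $2 \leq k < n$ is a finite $k$-dimensional CW complex $X_k$ with $\pi_1(X_k) = G$ and $\pi_i(X_k) = 0$ for $1 \leq i \leq k-1$ (equivalently, $\widetilde X_k$ is $(k-1)$-connected). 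We want to attach finitely many $(k+1)$-cells to obtain $X_{k+1}$ with $\widetilde X_{k+1}$ $k$-connected. At the end, $X_n$ is a finite complex that is $n$-skeleton-compatible with a $K(G,1)$: attaching cells of dimension $\geq n+2$ (and possibly infinitely many cells of dimension $n+1$) kills the remaining higher homotopy without changing the $n$-skeleton, so $G$ is of type $\F_n$.

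For the inductive step, Hurewicz applies because $\widetilde X_k$ is $(k-1)$-connected with $k \geq 2$, giving $\pi_k(X_k) \cong \pi_k(\widetilde X_k) \cong H_k(\widetilde X_k)$ as $\Z G$-modules. Since $\widetilde X_k$ is $k$-dimensional, $H_k(\widetilde X_k) = Z_k := \ker(\partial_k \colon C_k(\widetilde X_k) \to C_{k-1}(\widetilde X_k))$. The cellular chain complex
\[
0 \to Z_k \to C_k \to C_{k-1} \to \cdots \to C_0 \to \Z \to 0
\]
is exact, because $\widetilde X_k$ is $(k-1)$-connected, and every $C_i$ is a finitely generated free $\Z G$-module. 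So this is a partial free resolution of finite type through degree $k$, with kernel $Z_k$.

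The key algebraic step is to show that $Z_k$ is finitely generated as a $\Z G$-module whenever $G$ is of type $\FP_{k+1}$ (and $k+1 \leq n$). For this we use the generalized Schanuel lemma: given two partial projective resolutions of $\Z$ of length $k$, the $k$-th kernels agree up to adding finitely generated projective summands, provided the modules are finitely generated. Concretely, compare the partial resolution above with the first $k$ terms of a projective resolution $P_*$ with $P_0,\dots,P_{k+1}$ finitely generated. Schanuel gives
\[
Z_k \oplus P_k \oplus C_{k-1} \oplus \cdots \cong \ker(P_k \to P_{k-1}) \oplus C_k \oplus P_{k-1} \oplus \cdots .
\]
The kernel $\ker(P_k \to P_{k-1})$ is the image of $P_{k+1}$, hence finitely generated, so the right-hand side is finitely generated, and $Z_k$, being a direct summand of it, is finitely generated. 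I expect this to be the main point requiring care: keeping the indices straight in Schanuel's lemma and remembering that the relevant condition is $\FP_{k+1}$, not merely $\FP_k$. It is also why the induction only runs for $k < n$. The degree-$n$ kernel is not needed, since the last step constructs $X_n$ from $X_{n-1}$ using $\FP_n$.

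Finally, choose finitely many $\Z G$-generators of $Z_k \cong \pi_k(X_k)$ and represent them by maps $S^k \to X_k$; here we use Hurewicz to lift the homology classes to homotopy classes. Attach one $(k+1)$-cell along each of these maps to obtain $X_{k+1}$. By cellular approximation, attaching $(k+1)$-cells leaves $\pi_i$ unchanged for $i < k$. Furthermore, $\pi_k(X_{k+1})$ is the quotient of $\pi_k(X_k)$ by the $\pi_1$-submodule generated by the attaching maps, which is all of $\pi_k(X_k)$. Hence $\widetilde X_{k+1}$ is $k$-connected, completing the induction.
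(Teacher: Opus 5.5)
Your argument is correct and is essentially the paper's proof. Both build the complex skeleton by skeleton, identify $\pi_k$ with $H_k$ of the $(k-1)$-connected universal cover via Hurewicz, use $\FP_{k+1}$ to show that the top cycle module $Z_k$ is finitely generated over $\Z G$, and kill it with finitely many orbits of $(k+1)$-cells. The only difference is that you prove finite generation of $Z_k$ explicitly with the generalized Schanuel lemma, where the paper appeals to a comparison chain map from a finite-type resolution. One harmless slip: since $\pi_1(X_k)=G$, your inductive hypothesis should read $\pi_i(X_k)=0$ for $2\le i\le k-1$.
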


\begin{proof}
    If $n\le 2$ there is nothing to prove, so assume $n\ge 3$. By induction we may assume $G$ is of type $\F_{n-1}$ and $\FP_n$, and must prove it is of type $\F_n$. Since $G$ has a classifying space with finite $(n-1)$-skeleton, looking at the $(n-1)$-skeleton of the universal cover we get an $(n-1)$-dimensional free cocompact $G$-complex $X$ with vanishing $\pi_k$ for all $k\le (n-2)$. Our goal is to attach finitely many orbits of $n$-cells to $X$, to obtain an $n$-dimensional free cocompact $G$-complex $Y$ with vanishing $\pi_k$ for all $k \le (n-1)$. Assuming this for the moment, the space $G \backslash Y$ is a finite $n$-dimensional CW-complex with fundamental group $G$, which by the long exact sequence of a fibration (recall that we are assuming $n \geq 3$) has vanishing $\pi_k$ for all $2\leq k \leq (n-1)$. We can then attach $k$-cells to $G \backslash Y$ inductively for all $k > n$ to obtain a classifying space for $G$ with the same (finite) $n$-skeleton as $G \backslash Y$, revealing that $G$ is of type~$\F_n$. 
    
    It remains to construct $Y$. The action of $G$ on $X$ gives the cellular chain complex $C_*(X)$ the structure of a resolution of $\Z$ by free $\Z G$-modules. In particular
    \[H_{n-1}(X) \to C_{n-1}(X) \to C_{n-2}(X) \to \cdots \to C_0(X) \to \Z\]
    is an exact sequence of $\Z G$-modules, which are finitely generated with the possible exception of the leftmost term. Since $G$ is of type $\FP_n$, there is also an exact sequence of finitely generated projective $\Z G$-modules
    \[P_n \to P_{n-1} \to \cdots \to P_0 \to \Z.\]
    By the fundamental theorem of homological algebra, the identity on $\Z$ extends uniquely to a $G$-equivariant chain map $f_* \colon P_* \to C_*(X)$ (with $C_n(X)$ replaced by $H_{n-1}(X)$), which implies that $H_{n-1}(X)$ is finitely generated as a $\Z G$-module.
    
    Since $X$ is $(n-2)$-connected, by the Hurewicz theorem each of the finitely many $\Z G$-generators of $H_{n-1}(X) \cong \pi_{n-1}(X)$ is represented by a map $S^{n-1} \to X$. For each such generator we can extend the map from $S^{n-1}$ to a map from the $n$-disk $D^n$ by adding an $n$-cell to $X$. When we do this for each $\Z G$ generator, and extend $G$-equivariantly, we obtain an $n$-dimensional free cocompact $G$-complex $Y$ with $(n-1)$-skeleton $X$ (hence still vanishing $\pi_k$ for $k\le n-2$) and now with vanishing $\pi_{n-1}(X)$, which concludes the proof.
\end{proof}

Now we can prove Theorem~\ref{thm:FP_to_F}. The construction is inspired by the proof of \cite[Theorem~E]{BDM}, which states that every recursively presented group embeds into a finitely presented acyclic group. The same idea was later used in \cite[Theorem 4]{FFLM} to give the first example of a finitely presented non-amenable boundedly acyclic group. We remark that the essence of the proof is another kind of ``rope trick''.

\begin{proof}[Proof of Theorem~\ref{thm:FP_to_F}]
    Suppose that every finitely generated recursively presented group embeds into a recursively presented group of type $\FP_n$. We need to show that every finitely presented group $G$ embeds into a group of type $\F_n$, and by \cite{higman} we may assume that $G$ is a universal finitely presented group. Let $\iota \colon G \to H$ be an embedding into a recursively presented group $H$ of type $\FP_n$. By the Higman embedding theorem, there is an embedding $\jmath \colon H \to G$. We have a chain of inclusions $G \xrightarrow{\iota} H \xrightarrow{\jmath} G$.
    The composition $\jmath \iota$ is a self-embedding of $G$; let $E$ be the corresponding ascending HNN extension, with stable letter $t$. Being an ascending HNN extension of $G$, which is finitely presented, $E$ is finitely presented.

    We now claim that $E$ is isomorphic to an ascending HNN extension of $\jmath(H) \cong H$. By \cite[Lemma 3.1]{ascending}, we need to check three things.
    \begin{enumerate}
        \item $t\jmath(H)t^{-1} \subset \jmath(H)$. Indeed $t \jmath(H) t^{-1} \subset tGt^{-1} = \jmath \iota(G) \subset \jmath(H)$.
        \item $E = \langle \jmath(H), t \rangle$. Indeed $\jmath(H)$ contains $\jmath \iota(G) = t^{-1} G t$ and $E = \langle G, t \rangle$.
        \item $t^n \notin \jmath(H)$ for any $n \neq 0$, This follows from the fact that $\jmath(H) \subset G$.
    \end{enumerate}
    Since $H$ is of type $\FP_n$, by \cite[Proposition 2.12]{bieri}, its ascending HNN extension $E$ is also of type $\FP_n$. Thus $E$, being both finitely presented and of type $\FP_n$, is of type $\F_n$ by Lemma~\ref{lem:FP_to_F}.
\end{proof}

\begin{remark}
    Even though we do not have a higher version of Leary's theorem, it is very reasonable to expect that such a version would preserve recursive presentability, which amounts to saying that the embedding should be more or less explicit. This is the case for Leary's embedding in the $\FP_2$ case \cite{leary:subgroups}, for instance.
    
    Indeed, start with a group $G = F/R$. The first step \cite[Lemma 2.5]{leary:subgroups} embeds the double $L = F *_R F$ into a group $P$ of type $\FP_2$. This uses a construction from \cite{leary:uncountably}, which is applied in \cite[Lemma 2.4]{leary:subgroups}, identifying a specific subgroup $V_S$ of a finitely generated free group. The construction takes as input a set $S$ of natural numbers, and in the relevant application this set is some coding of the relations in $R$. Hence if $R$ is recursively enumerable, $V_S$ is also recursively enumerable, and all of the following steps of the construction are explicit, so that $P$ is recursively presented.

    The second step is the Higman rope trick \cite[Lemma 2.2]{leary:subgroups}: the type $\FP_2$ container for $G$ is the HNN extension $\higig$ with vertex group $P \times G$ and edge group $L$ conjugating the embeddings $\iota \times 1$ and $\iota \times \pi$. Since $\higig$ is obtained from $P \times G$ by adding a generator and finitely many relations, and since $P \times G$ is recursively presented, we conclude that $\higig$ is too.
\end{remark}

\section{Finiteness properties of Higman containers}\label{sec:main}

Now we move on to the proof of Theorem~\ref{thm:rope}. We keep the notation from Definition \ref{def:higig}: $G = F/R$, $L = F_1 *_R F_2$, $\iota \colon L \to P$, and $\pi \colon L \to G$, and $\higig$ is the HNN extension conjugating the embeddings $\iota \times 1, \iota \times \pi \colon L \to P \times G$. We always assume that $F$ (hence $G$ and $L$) and $P$ (hence $\higig$) are finitely generated.

\begin{remark}\label{rem:assumptions}
    If $G$ is finite, then $R$ is a finite-index subgroup of $F$, hence $L$ is of type $\F_\infty$. Similarly, if $R = 1$, then $L$ is a free group, hence it is of type $\F_\infty$. It then follows from \cite[Proposition 2.13]{bieri} and Lemma \ref{lem:FP_to_F} that $\higig$ will inherit the finiteness properties of $P \times G$, hence the finiteness properties of $P$. This is why the statement requires that $G$ is infinite and $R \neq 1$, which we assume from now on. Note that these assumptions are equivalent to saying that the free group $R$ is infinite rank.
\end{remark}

\begin{lemma}
\label{reduction to ker L to G}
    If $\ker(H_2(L; \Q) \xrightarrow{H_2(\pi)} H_2(G; \Q))$ is infinite-dimensional, then $H_3(\higig; \Q)$ is infinite-dimensional.
\end{lemma}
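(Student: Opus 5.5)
The plan is to apply the Mayer--Vietoris sequence for HNN extensions with rational coefficients. For $\higig$ with vertex group $P \times G$, edge group $L$ and the two embeddings $\iota \times 1$ and $\iota \times \pi$, this sequence reads
\[
\cdots \to H_3(P\times G;\Q) \to H_3(\higig;\Q) \xrightarrow{\partial} H_2(L;\Q) \xrightarrow{(\iota\times 1)_* - (\iota\times\pi)_*} H_2(P\times G;\Q) \to \cdots
\]
By exactness, the image of $\partial$ equals the kernel of the difference map $\delta := (\iota\times 1)_* - (\iota\times\pi)_*$. It therefore suffices to show that $\ker \delta$ is infinite-dimensional, since $H_3(\higig;\Q)$ surjects onto it.

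To compute $\delta$, I would use the Künneth formula over $\Q$:
\[
H_2(P\times G;\Q) \cong H_2(P;\Q)\oplus \big(H_1(P;\Q)\otimes H_1(G;\Q)\big)\oplus H_2(G;\Q).
\]
The map $(\iota\times 1)_*$ is induced by $x\mapsto (\iota(x),1)$, so on $H_2(L;\Q)$ it is $c \mapsto (\iota_* c, 0, 0)$. The map $(\iota\times\pi)_*$ is induced by the diagonal composed with $\iota\times\pi$. On $H_2$, the diagonal $L \to L\times L$ sends $c$ to $c\otimes 1 + \sum c'\otimes c'' + 1\otimes c$, where the middle term comes from the coproduct component in $H_1\otimes H_1$. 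Hence
\[
(\iota\times\pi)_*(c) = \big(\iota_*c,\ (\iota_*\otimes\pi_*)\Delta_{1,1}(c),\ \pi_*c\big),
\]
and so $\delta(c) = \big(0,\ -(\iota_*\otimes\pi_*)\Delta_{1,1}(c),\ -\pi_* c\big)$. In particular, $\ker\delta$ contains $\ker(\pi_*)\cap \ker\big((\iota_*\otimes\pi_*)\Delta_{1,1}\big)$.

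The remaining step, which I expect to be the main obstacle, is to handle the middle Künneth component. My preferred route avoids computing $\Delta_{1,1}$ explicitly. The target $H_1(P;\Q)\otimes H_1(G;\Q)$ is finite-dimensional because $P$ and $G$ are finitely generated. So the map $K := \ker(H_2(\pi)) \to H_1(P;\Q)\otimes H_1(G;\Q)$ has finite-dimensional image, and its kernel is therefore infinite-dimensional whenever $K$ is. That kernel lies in $\ker\delta$, so $\ker\delta$ is infinite-dimensional, and hence so is $H_3(\higig;\Q)$.

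This sidesteps needing the precise form of the cross term. One must only check that the identification of $(\iota\times\pi)_*$ through the diagonal and Künneth is correct, and in particular that its $H_2(P)$-component is $\iota_*$. That holds because projecting $P\times G$ to $P$ recovers $\iota$. Similarly, the $H_2(G)$-component is $\pi_*$, obtained by projecting to $G$.
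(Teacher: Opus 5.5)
Your proof is correct and follows essentially the same route as the paper. Both use the Mayer--Vietoris sequence for the HNN extension and the K{\"u}nneth decomposition of $H_2(P\times G;\Q)$: the $H_2(P;\Q)$-components of the two maps cancel, the $H_2(G;\Q)$-component of the difference is $-H_2(\pi)$, and the mixed term $H_1(P;\Q)\otimes H_1(G;\Q)$ is finite-dimensional. The paper phrases this last step via the projection $\kappa$ killing the mixed term, so that $\ker\alpha$ has finite codimension in $\ker(\kappa\circ\alpha)=\ker H_2(\pi)$, which is equivalent to your restriction argument.
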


\begin{proof}
    We omit the rational coefficients throughout the proof. The Mayer--Vietoris sequence for HNN extensions \cite[Theorem 2.12]{bieri} includes
    \[H_3(\higig) \to H_2(L) \xrightarrow{\alpha} H_2(P \times G),\]
    where $\alpha = H_2(\iota \times 1) - H_2(\iota \times \pi)$. By the K{\"u}nneth formula, we have a surjection
    \[H_2(P \times G) \cong H_2(P) \oplus \left( H_1(P) \otimes H_1(G) \right) \oplus H_2(G) \xrightarrow{\kappa} H_2(P) \oplus H_2(G),\]
    by killing the mixed term. Since both $G$ and $P$ are finitely generated, $\ker(\kappa)$ is finite-dimensional. The composition
    \[H_2(L) \xrightarrow{\kappa \circ \alpha} H_2(P) \oplus H_2(G)\]
    is equal to
    \[(H_2(\iota) \oplus 0) - (H_2(\iota) \oplus H_2(\pi)) = 0 \oplus - H_2(\pi).\]
    Since we are assuming that $\ker(H_2(\pi))$ is infinite-dimensional, we deduce that $\ker(\kappa \circ \alpha)$ is infinite-dimensional. But $\ker(\kappa)$ is finite-dimensional, so $\ker(\alpha)$ must be infinite-dimensional. Finally, $H_3(\higig; \Q)$ surjects onto $\ker(\alpha)$, and we conclude.
\end{proof}

\begin{lemma}
\label{L inf dim}
    We have an isomorphism $H_2(L; \Z) \cong \ker(H_1(R; \Z) \to H_1(F; \Z))$. In particular, $H_2(L; \Q)$ is infinite-dimensional.
\end{lemma}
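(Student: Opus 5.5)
We apply the Mayer--Vietoris sequence for the amalgamated free product $L = F_1 *_R F_2$ (e.g.\ \cite[Section II.7]{brown}) with integer coefficients. Since $F_1$, $F_2$ and $R$ are free, they have vanishing homology in degrees $\geq 2$. The relevant portion of the sequence is
\[
0 = H_2(F_1) \oplus H_2(F_2) \to H_2(L) \to H_1(R) \xrightarrow{\ \beta\ } H_1(F_1) \oplus H_1(F_2),
\]
where $\beta(x) = (i_*(x), -i_*(x))$ and $i \colon R \to F$ is the inclusion. Hence $H_2(L) \cong \ker(\beta) = \ker(H_1(R) \to H_1(F))$, which is the first claim.

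For the second claim we pass to rational coefficients. The same sequence gives $H_2(L;\Q) \cong \ker(H_1(R;\Q) \to H_1(F;\Q))$; alternatively, tensor the integral statement with $\Q$, using that $\Q$ is flat. By Remark~\ref{rem:assumptions}, $G$ is infinite and $R \neq 1$, so $R$ is a free group of infinite rank. Therefore $H_1(R;\Q) = R^{ab} \otimes \Q$ is infinite-dimensional, while $H_1(F;\Q)$ is finite-dimensional because $F$ is finitely generated. The kernel of a linear map from an infinite-dimensional space to a finite-dimensional one is infinite-dimensional, which finishes the proof.

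The main point to justify is why $R$ has infinite rank. By Nielsen--Schreier, $R$ is free. Since $R$ is a normal subgroup of infinite index in $F$, we must exclude that $R$ is finitely generated. A nontrivial finitely generated normal subgroup of a free group has finite index (Schreier, or Karrass--Solitar), so $R$ is not finitely generated, and its rank, which is countable and not finite, is infinite. Everything else is a routine check that the Mayer--Vietoris maps have the stated form.
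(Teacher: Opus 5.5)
Your proof is correct and follows the paper's argument: the Mayer--Vietoris sequence for $L = F_1 *_R F_2$, the vanishing of $H_2$ for free groups, and the observation that $H_1(R) \to H_1(F)^2$ has the same kernel as $H_1(R) \to H_1(F)$, followed by comparing infinite rank with finite rank. The only differences are minor: you justify that $R$ has infinite rank via the finitely generated normal subgroup theorem, which the paper simply asserts in its standing remark. You also use rank--nullity over $\Q$, where the paper instead rationalises an infinite-rank free abelian group.
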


\begin{proof}
    The Mayer--Vietoris sequence for amalgams \cite[Theorem 2.10]{bieri} includes
    \[H_2(F; \Z)^2 \to H_2(L; \Z) \to H_1(R; \Z) \to H_1(F; \Z)^2.\]
    The first term vanishes because $F$ is free. The last arrow is the map induced by the inclusion $R \to F$ in the first coordinate, and its negative on the second coordinate, so it has the same kernel as the map induced by the inclusion itself, proving the first statement. The last statement follows from the fact that $H_1(F;\Z)$ is finitely generated, while $H_1(R; \Z)$ is a free abelian group of infinite rank, hence by the universal coefficient theorem $H_2(L;\Q)$ is the rationalisation of a free abelian group of infinite rank.
\end{proof}

This is already enough to prove Theorem \ref{thm:rope} in case $G$ is of type $\FP_2$, or simply $H_2(G; \Q)$ is finite-dimensional. In general, we are left with proving:

\begin{lemma}
\label{ker inf dim}
    The induced map $H_2(\pi) \colon H_2(L; \Q) \to H_2(G; \Q)$ has infinite-dimensional kernel.
\end{lemma}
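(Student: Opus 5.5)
The plan is to identify $H_2(\pi)$ explicitly, via Hopf-type descriptions of both groups, as a natural map between two quotients of the relation module. Its kernel will then be the image of the augmentation ideal acting on that module, and this is infinite-dimensional because $G$ is infinite and $R\neq 1$. Throughout, $M := H_1(R;\Q)$ is the rational relation module, a left $\Q G$-module via conjugation by $F$.

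\textbf{Step 1: the two sides.} By Lemma \ref{L inf dim} (with rational coefficients, which is harmless since all groups involved are torsion-free where it matters), the Mayer--Vietoris boundary $\partial\colon H_2(L;\Q)\to M$ is injective with image $\ker(M\to H_1(F;\Q))$. On the other side, consider the five-term exact sequence of $1\to R\to F\to G\to 1$. Since $H_2(F)=0$, it gives an embedding
\[H_2(G;\Q)\hookrightarrow M_G = M/I_G M, \qquad I_G = \text{augmentation ideal of } \Q G,\]
whose image is $\ker(M_G\to H_1(F;\Q))$. This is rational Hopf.

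\textbf{Step 2: compatibility (main obstacle).} The key claim is that the square
\[H_2(L;\Q)\xrightarrow{\partial} M \twoheadrightarrow M_G\]
agrees, up to sign, with $H_2(L;\Q)\xrightarrow{H_2(\pi)}H_2(G;\Q)\hookrightarrow M_G$. This is a naturality statement, and I expect it to require the most care. I would prove it topologically.
\begin{itemize}
\item Take $X = K(F_1,1)\cup_{K(R,1)} \mathrm{Cyl}\cup K(F_2,1)$, where $\mathrm{Cyl}$ is the mapping cylinder gluing along $K(R,1)$. The map $\pi$ is realised by sending $K(F_2,1)$ to a point and $K(F_1,1)$ to $K(G,1)$ via $F\to G$.
\item This factors through the space $Y = K(F_1,1)\cup_{K(R,1)} C K(R,1)$, obtained by coning off. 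The map $X\to Y$ collapses $K(F_2,1)$; on $H_2$ it is compatible with the Mayer--Vietoris boundary. Indeed, $H_2(Y)\cong H_2(K(F_1),K(R))\cong H_1(R)$ restricted to the kernel of $H_1(R)\to H_1(F)$.
\item The map $Y\to K(G,1)$ then induces on $H_2$ exactly the relative-homology description of the five-term/Hopf map. Concretely, $H_2(F,R)\to H_2(G)$ factors through the quotient of $H_1(R)$ by the coinvariant relations: this is the standard derivation of the five-term sequence from the Lyndon--Hochschild--Serre spectral sequence, or from the pair $(K(F),K(R))$ covering.
\end{itemize}
If this topological route proves messy, I would instead verify the claim on the level of Hopf's formula. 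With integral coefficients the map in question is the natural quotient
\[(R\cap[F,F])/[R,R]\to (R\cap[F,F])/[F,R],\]
and one checks this on explicit $2$-cycles given by relators.

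\textbf{Step 3: the kernel is large.} Granting Step 2, we get
\[\ker H_2(\pi)\cong \ker(M\to H_1(F;\Q))\cap I_G M = I_G M.\]
The last equality holds because elements $(g-1)m$ map to $0$ in $H_1(F;\Q)$, conjugates having the same abelianisation. So it remains to show that $I_G M$ is infinite-dimensional.
\begin{itemize}
\item Take the relation sequence $0\to M\to (\Q G)^d\to I_G\to 0$ (Lyndon), so that $M$ is a $\Q G$-submodule of a free module.
\item Pick $0\neq v\in M$, which exists since $R\neq 1$. The map $\Q G\to(\Q G)^d$, $a\mapsto av$, is injective, so $\Q G\, v$ is infinite-dimensional because $G$ is infinite.
\item $I_G v$ has codimension at most $1$ in $\Q G\, v$, so $I_G v\subseteq I_G M$ is infinite-dimensional. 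Hence $\ker H_2(\pi)$ is infinite-dimensional.
\end{itemize}
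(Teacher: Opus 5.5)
Your overall route is the paper's. You identify $\ker H_2(\pi)$ with $I_G\cdot H_1(R)$; integrally this is the paper's $[F,R]/[R,R]$, which the paper gets from Hopf's formula, and your five-term-sequence/transgression argument in Steps 1--2 is a legitimate substitute. You then show this is infinite-dimensional by translating one nonzero element $v$ and using that $I_Gv$ has codimension at most $1$ in $\Q G\,v$.

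The step that fails is the second bullet of Step 3: the map $\Q G\to(\Q G)^d$, $a\mapsto av$, is \emph{not} injective in general. Take $G=\langle x,y\mid y^2,[x,y]\rangle\cong\Z\times\Z/2$, which is infinite with $R\neq 1$, and let $v=[y^2]\in M$. Since $y$ commutes with $y^2$ in $F$, you get $y\cdot v=v$. Hence $(1-y)v=0$ although $1-y\neq 0$ in $\Q G$; in Fox coordinates $v=(0,1+y)\neq 0$ and $(1-y)(1+y)=1-y^2=0$. Torsion in $G$ produces zero divisors in $\Q G$ and failures of this kind. For torsion-free $G$ your claim would rest on Kaplansky's (open) zero-divisor conjecture.

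The conclusion you actually need, that $\Q G\,v$ is infinite-dimensional, is true, and the correct reason is the paper's. Let $S\subset G$ be the union of the (finite) supports of the $d$ coordinates of $v$. Since $G$ is infinite, choose $g_1,g_2,\dots$ inductively with $g_n\notin\bigcup_{i<n}g_iSS^{-1}$. Then the sets $g_nS$ are pairwise disjoint, so the vectors $g_nv$ have pairwise disjoint supports and are linearly independent.

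In the paper's language, $(\Q G)^d=C_1(Y;\Q)$ for the covering graph $Y$ of the rose with deck group $G$, and $M=H_1(Y;\Q)\subseteq C_1(Y;\Q)$. The argument above is then exactly the statement that an embedded circuit has infinitely many pairwise disjoint translates. With this replacement the rest of Step 3 (codimension one, $I_Gv\subseteq I_GM$) goes through unchanged.
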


\begin{proof}
    By the universal coefficient theorem it suffices to show that the kernel of the induced map in integral homology has infinite rank.
    
    Let us fix some notation: let $F_1, F_2$ be two copies of $F$, and $R_1, R_2$ the corresponding copies of $R$. Then $L = (F_1 * F_2) / N$, where $N = \normal{r_1 r_2^{-1} \mid r \in R}$, and the map $\pi \colon L \to G \cong F_1/R_1$ is given by killing all of $F_2$. The natural Hopf formula \cite[II.5]{brown} realises the induced map in $H_2(- ; \Z)$ as:
    \[\frac{N \cap [F_1 *F_2, F_1 * F_2]}{[F_1 *F_2, N]} \xrightarrow{r_1r_2^{-1} \mapsto r} \frac{R \cap [F, F]}{[F, R]}.\]
    The connecting homomorphism $H_2(L; \Z) \to \ker(H_1(R; \Z) \to H_1(L; \Z))$, which is an isomorphism by Lemma \ref{L inf dim}, can be identified as follows, using the Hopf formula, and the identification of $H_1(-;\Z)$ with the abelianisation:
    \[\frac{N \cap [F_1 *F_2, F_1 * F_2]}{[F_1 *F_2, N]} \xrightarrow{r_1r_2^{-1} \mapsto r} \ker\left(\frac{R}{[R, R]} \to \frac{F}{[F, F]}\right) = \frac{R \cap [F, F]}{[R, R]}.\]
    Putting together these maps identifies the kernel of $H_2(\pi)$ with the abelian group $[F, R]/[R, R]$. This is a subgroup of the free abelian group $R/[R, R]$, so it remains to show that it has infinite rank. This is easiest to see topologically.
    
    Let $X$ be a wedge of circles with fundamental group $F$, and let $Y \to X$ be the regular covering space associated to $R$. Hence $R = \pi_1(Y)$ and the deck transformation group is $G$. The homology $H_1(Y)$ is naturally identified with $H_1(R) = R/[R, R]$. Under this identification, the group $[F, R]/[R, R]$ is identified with the subgroup $I_G \cdot H_1(Y) = \langle gc - c \mid g \in G, c \in H_1(Y) \rangle$. Now pick an embedded circuit $c$ in $Y$, which we see as an element of $H_1(Y)$. Because $X$ is compact and $Y$ is not, there is a sequence $(g_n)_{n \geq 1}$ of elements in $G$ such that the circuits $(g_n c)_{n \geq 1}$ are pairwise disjoint, and hence the subgroup $\langle g_n c \mid n \geq 1\rangle < H_1(Y)$ has infinite rank. But this is contained in $\langle c, g_n c - c \mid n \geq 1\rangle < \langle c, I_G \cdot H_1(Y) \rangle$, and so $I_G \cdot H_1(Y)$ must also have infinite rank.
\end{proof}

\begin{proof}[Proof of Theorem \ref{thm:rope}]
    Combine Lemmas \ref{reduction to ker L to G} and \ref{ker inf dim}.
\end{proof}

\begin{remark}
    It is important in all of this that $F$ is a free group. Thus one might wonder whether viewing $G$ as $F/R$ for some non-free $F$ could lead to a higher version of the rope trick. Unfortunately the other part of the proof of these embedding theorems, for example Leary's proof that $\langle F,t\mid tr=rt$ for all $r\in R\rangle$ always embeds in a group of type $\FP_2$, relies heavily on $F$ being free.
\end{remark}

\footnotesize

\bibliographystyle{amsalpha}
\bibliography{ref}

\providecommand{\bysame}{\leavevmode\hbox to3em{\hrulefill}\thinspace}
\providecommand{\MR}{\relax\ifhmode\unskip\space\fi MR }
% \MRhref is called by the amsart/book/proc definition of \MR.
\providecommand{\MRhref}[2]{%
  \href{http://www.ams.org/mathscinet-getitem?mr=#1}{#2}
}
\providecommand{\href}[2]{#2}
\begin{thebibliography}{GMSW01}

\bibitem[Ago]{agol}
Ian Agol, \emph{A torsionfree group with infinite cohomological dimension and
  no infinitely generated free abelian subgroup}, MathOverflow,
  \url{https://mathoverflow.net/q/60565}.

\bibitem[BB97]{BB}
Mladen Bestvina and Noel Brady, \emph{Morse theory and finiteness properties of
  groups}, Invent. Math. \textbf{129} (1997), no.~3, 445--470. \MR{1465330}

\bibitem[BDM83]{BDM}
G.~Baumslag, E.~Dyer, and C.~F. Miller, III, \emph{On the integral homology of
  finitely presented groups}, Topology \textbf{22} (1983), no.~1, 27--46.
  \MR{682058}

\bibitem[Bes]{bestvina}
Mladen Bestvina, \emph{Questions in geometric group theory},
  \url{https://www.math.utah.edu/~bestvina/eprints/questions-updated.pdf}.

\bibitem[BH74]{BH}
William~W. Boone and Graham Higman, \emph{An algebraic characterization of
  groups with soluble word problem}, J. Austral. Math. Soc. \textbf{18} (1974),
  41--53. \MR{357625}

\bibitem[Bie81]{bieri}
Robert Bieri, \emph{Homological dimension of discrete groups}, second ed.,
  Queen Mary College Mathematics Notes, Queen Mary College, Department of Pure
  Mathematics, London, 1981. \MR{715779}

\bibitem[Bra99]{brady}
Noel Brady, \emph{Branched coverings of cubical complexes and subgroups of
  hyperbolic groups}, J. London Math. Soc. (2) \textbf{60} (1999), no.~2,
  461--480. \MR{1724853}

\bibitem[Bro87]{brown87}
Kenneth~S. Brown, \emph{Finiteness properties of groups}, Proceedings of the
  {N}orthwestern conference on cohomology of groups ({E}vanston, {I}ll., 1985),
  vol.~44, 1987, pp.~45--75. \MR{885095}

\bibitem[Bro94]{brown}
\bysame, \emph{Cohomology of groups}, Graduate Texts in Mathematics, vol.~87,
  Springer-Verlag, New York, 1994, Corrected reprint of the 1982 original.
  \MR{1324339}

\bibitem[BW07]{arithmetic}
Kai-Uwe Bux and Kevin Wortman, \emph{Finiteness properties of arithmetic groups
  over function fields}, Invent. Math. \textbf{167} (2007), no.~2, 355--378.
  \MR{2270455}

\bibitem[BZ22]{tbt}
James Belk and Matthew C.~B. Zaremsky, \emph{Twisted {B}rin-{T}hompson groups},
  Geom. Topol. \textbf{26} (2022), no.~3, 1189--1223. \MR{4466647}

\bibitem[EG57]{EG}
Samuel Eilenberg and Tudor Ganea, \emph{On the {L}usternik-{S}chnirelmann
  category of abstract groups}, Ann. of Math. (2) \textbf{65} (1957), 517--518.
  \MR{85510}

\bibitem[FFLM24]{FFLM}
Francesco Fournier-Facio, Clara L{\"o}h, and Marco Moraschini, \emph{Bounded
  cohomology of finitely presented groups: vanishing, non-vanishing, and
  computability}, Ann. Sc. Norm. Super. Pisa Cl. Sci. (5) \textbf{25} (2024),
  no.~2, 1169--1202. \MR{4778473}

\bibitem[GMSW01]{ascending}
Ross Geoghegan, Michael~L. Mihalik, Mark Sapir, and Daniel~T. Wise,
  \emph{Ascending {HNN} extensions of finitely generated free groups are
  {H}opfian}, Bull. London Math. Soc. \textbf{33} (2001), no.~3, 292--298.
  \MR{1817768}

\bibitem[Hig61]{higman}
Graham Higman, \emph{Subgroups of finitely presented groups}, Proc. Roy. Soc.
  London Ser. A \textbf{262} (1961), 455--475. \MR{130286}

\bibitem[HNN49]{HNN}
Graham Higman, B.~H. Neumann, and Hanna Neumann, \emph{Embedding theorems for
  groups}, J. London Math. Soc. \textbf{24} (1949), 247--254. \MR{32641}

\bibitem[KM]{kourovka}
E.~I. Khukhro and V.~D. Mazurov, \emph{The {K}ourovka notebook},
  \url{https://kourovkanotebookorg.wordpress.com/wp-content/uploads/2026/07/21tkt.pdf}.

\bibitem[Lea18a]{leary:subgroups}
Ian~J. Leary, \emph{Subgroups of almost finitely presented groups}, Math. Ann.
  \textbf{372} (2018), no.~3-4, 1383--1391. \MR{3880301}

\bibitem[Lea18b]{leary:uncountably}
\bysame, \emph{Uncountably many groups of type {$FP$}}, Proc. Lond. Math. Soc.
  (3) \textbf{117} (2018), no.~2, 246--276. \MR{3851323}

\bibitem[LIP24]{LIP}
Claudio Llosa~Isenrich and Pierre Py, \emph{Subgroups of hyperbolic groups,
  finiteness properties and complex hyperbolic lattices}, Invent. Math.
  \textbf{235} (2024), no.~1, 233--254. \MR{4688705}

\bibitem[LS01]{LS}
Roger~C. Lyndon and Paul~E. Schupp, \emph{Combinatorial group theory}, 1977
  ed., Classics in Mathematics, Springer-Verlag, Berlin, 2001. \MR{1812024}

\bibitem[Rot95]{rotman}
Joseph~J. Rotman, \emph{An introduction to the theory of groups}, fourth ed.,
  Graduate Texts in Mathematics, vol. 148, Springer-Verlag, New York, 1995.
  \MR{1307623}

\bibitem[SBR02]{S}
Mark~V. Sapir, Jean-Camille Birget, and Eliyahu Rips, \emph{Isoperimetric and
  isodiametric functions of groups}, Ann. of Math. (2) \textbf{156} (2002),
  no.~2, 345--466. \MR{1933723}

\bibitem[Shl]{name}
ShlomiF, \emph{Why is ``the {H}igman rope trick'' thus named?}, MathOverflow,
  \url{https://mathoverflow.net/q/195126}.

\bibitem[Sta63]{stallings}
John Stallings, \emph{A finitely presented group whose 3-dimensional integral
  homology is not finitely generated}, Amer. J. Math. \textbf{85} (1963),
  541--543. \MR{158917}

\bibitem[SWZ19]{SWZ}
Rachel Skipper, Stefan Witzel, and Matthew C.~B. Zaremsky, \emph{Simple groups
  separated by finiteness properties}, Invent. Math. \textbf{215} (2019),
  no.~2, 713--740. \MR{3910073}

\bibitem[Wag]{wagner}
Francis Wagner, \emph{Malnormal subgroups of finitely presented groups}, arXiv
  preprint arXiv:2404.00841.

\bibitem[Wal65]{wall1}
C.~T.~C. Wall, \emph{Finiteness conditions for {${\rm CW}$}-complexes}, Ann. of
  Math. (2) \textbf{81} (1965), 56--69. \MR{171284}

\bibitem[Wal66]{wall2}
\bysame, \emph{Finiteness conditions for {${\rm CW}$} complexes. {II}}, Proc.
  Roy. Soc. London Ser. A \textbf{295} (1966), 129--139. \MR{211402}

\bibitem[Zar]{matt}
Matthew C.~B. Zaremsky, \emph{Some open problems},
  \url{https://zaremsky.github.io/open_problems.pdf}.

\end{thebibliography}

\vspace{0.5cm}

\normalsize

\noindent{\textsc{Department of Pure Mathematics and Mathematical Statistics, University of Cambridge, UK}}

\noindent{\textit{E-mail address:} \texttt{ff373@cam.ac.uk}} \\

\noindent{\textsc{Department of Mathematics and Statistics, University at Albany (SUNY), Albany, NY, USA}}

\noindent{\textit{E-mail address:} \texttt{mzaremsky@albany.edu}}

\end{document}